\def\@cite#1#2{[{{\bfseries #1}\if@tempswa , #2\fi}]}
\renewcommand{\section}{%
\@startsection{section}{1}{\z@}
{0.5truecm plus -1ex minus -.2ex}%
{1.0ex plus .2ex}{\bfseries\large}}
\def\@seccntformat#1{\csname the#1\endcsname.\ }
\numberwithin{equation}{section} 
\newtheorem{thm}{Theorem}[section]
\newtheorem{lemma}[thm]{Lemma}
\theoremstyle{definition}
\newtheorem{df}{Definition}[section]
\def\d{\delta}
\def\D{\Delta}
\def\om{\Omega}
\def\na{\nabla}
\def\vp{\varphi}
\def\e{\varepsilon}
\def\h{\hspace}
\def\etmax{T_{{\rm max},\e}}
\def\n#1{n_#1}
\def\en#1{n_{#1,\e}}
\def\norm#1{\|#1\|}
\begin{document}
\footnote[0]
    {2010{\it Mathematics Subject Classification}\/. 
    Primary: 35B40; Secondary: 35K55, 35Q30, 92C17.
    }
\footnote[0]
    {{\it Key words and phrases}\/: 
    chemotaxis; Navier--Stokes; Lotka--Volterra; 
large-time behaviour. 
    }
\begin{center}
    \Large{{\bf 
    Boundedness and stabilization\\ 
       in a three-dimensional two-species 
       chemotaxis-Navier--Stokes system\\ 
       with competitive kinetics
    }}
\end{center}
\vspace{5pt}
\begin{center}
    Misaki Hirata, 
    Shunsuke Kurima, 
    Masaaki Mizukami\footnote{Partially supported by JSPS Research Fellowships 
       for Young Scientists, No.\ 17J00101.    
    }, 
    Tomomi Yokota%
   \footnote{Corresponding author}%
   \footnote{Partially supported by Grant-in-Aid for
    Scientific Research (C), No.\,16K05182.}
   \footnote[0]{
    E-mail: 
    {\tt misaki.hirata0928@gmail.com}, 
    {\tt shunsuke.kurima@gmail.com}, }\footnote[0]{
    {\tt masaaki.mizukami.math@gmail.com}, 
    {\tt yokota@rs.kagu.tus.ac.jp}
    }\\
    \vspace{12pt}
    Department of Mathematics, 
    Tokyo University of Science\\
    1-3, Kagurazaka, Shinjuku-ku, 
    Tokyo 162-8601, Japan\\
    \vspace{2pt}
\end{center}
\begin{center}    
    \small \today
\end{center}

\vspace{2pt}
\newenvironment{summary}
{\vspace{.5\baselineskip}\begin{list}{}{%
     \setlength{\baselineskip}{0.85\baselineskip}
     \setlength{\topsep}{0pt}
     \setlength{\leftmargin}{12mm}
     \setlength{\rightmargin}{12mm}
     \setlength{\listparindent}{0mm}
     \setlength{\itemindent}{\listparindent}
     \setlength{\parsep}{0pt}
     \item\relax}}{\end{list}\vspace{.5\baselineskip}}
\begin{summary}
{\footnotesize {\bf Abstract.}
    This paper is concerned with 
    the two-species chemotaxis-Navier--Stokes system 
    with Lotka--Volterra competitive kinetics
      	\begin{align*}
          \begin{cases}
            (\n1)_t+u\cdot\na\n1
            =\D\n1-\chi_1\na\cdot(\n1\na c)+\mu_1\n1(1-\n1-a_1\n2)
           &\text{in}\ \om\times(0,\infty),
          \\
            (\n2)_t+u\cdot\na\n2
            =\D\n2-\chi_2\na\cdot(\n2\na c)+\mu_2\n2(1-a_2\n1-\n2)
           &\text{in}\ \om\times(0,\infty),
          \\
            \h{6.3mm}c_t+u\cdot\na c
            =\D c-(\alpha\n1+\beta\n2)c
           &\text{in}\ \om\times(0,\infty),
          \\
            \h{3.1mm}u_t+(u\cdot\na)u
            =\D u+\nabla P+(\gamma\n1+\d\n2)\na\Phi,
            \quad\na\cdot u=0
           &\text{in}\ \om\times(0,\infty)
          \end{cases}
        \end{align*}
under homogeneous Neumann boundary conditions and 
initial conditions, where $\Omega$ is  
a bounded domain in $\mathbb{R}^3$ 
with smooth boundary. 
Recently, in the $2$-dimensional setting, 
global existence and stabilization of classical solutions to the above system 
were first established. 
However, the $3$-dimensional case has not been studied: 
Because of difficulties in the Navier--Stokes system, we can not expect 
existence of classical solutions to the above system. 
The purpose of this paper is to obtain global existence 
of weak solutions to the above system, 
and their eventual smoothness and stabilization. 
}
\end{summary}
\vspace{10pt}

\newpage

\section{Introduction}

This paper deals with the following two-species 
chemotaxis-Navier--Stokes system with 
Lotka--Volterra competitive kinetics:
  \begin{equation}\label{P}
    \begin{cases}
      (\n1)_t+u\cdot\na\n1
      =\D\n1-\chi_1\na\cdot(\n1\na c)+\mu_1\n1(1-\n1-a_1\n2)
     &\text{in}\ \om\times(0,\infty),
    \\[1mm]
      (\n2)_t+u\cdot\na\n2
      =\D\n2-\chi_2\na\cdot(\n2\na c)+\mu_2\n2(1-a_2\n1-\n2)
     &\text{in}\ \om\times(0,\infty),
    \\[1mm]
      \h{7.1mm}c_t+u\cdot\na c
      =\D c-(\alpha\n1+\beta\n2)c
     &\text{in}\ \om\times(0,\infty),
    \\[1mm]
      \h{1.3mm}u_t+\kappa(u\cdot\na)u 
      =\D u+\na P+(\gamma\n1+\d\n2)\na\Phi, 
      \quad\na\cdot u=0
     &\text{in}\ \om\times(0,\infty),
    \\[1mm]
      \h{3.9mm}\partial_\nu\n1=\partial_\nu\n2=\partial_\nu c=0,
      \quad u=0
     &\text{on}\ \partial\om\times(0,\infty),
    \\[1mm]
      \h{0.5mm}
      \n1(\cdot,0)=n_{1,0},\quad
      \n2(\cdot,0)=n_{2,0},\quad
      c(\cdot,0)=c_0,\quad
      u(\cdot,0)=u_0
     &\text{in}\ \om,
    \end{cases}
  \end{equation}
where $\om$ is a bounded domain 
in $\mathbb{R}^3$ with smooth boundary $\partial\om$ and 
$\partial_\nu$ denotes differentiation with respect to the 
outward normal of $\partial\om$; 
$\kappa=1$, $\chi_1,\chi_2,a_1,a_2\geq0$ and 
$\mu_1,\mu_2,\alpha,\beta,\gamma,\d>0$ are 
constants; $n_{1,0},n_{2,0},c_0,u_0,\Phi$ 
are known functions satisfying
  \begin{align}
     \label{condi;ini1}
    &0<n_{1,0},n_{2,0}\in C(\overline{\om}), 
     \quad
     0<c_0\in W^{1,q}(\om),
     \quad
     u_0\in D(A^{\theta}),
    \\
     \label{condi;ini2}
    &\Phi\in C^{1+\lambda}(\overline{\om})
  \end{align}
for some $q>3$, $\theta\in(\frac{3}{4},1)$, $\lambda \in (0,1)$
and $A$ denotes the realization of the Stokes operator 
under homogeneous Dirichlet boundary conditions 
in the solenoidal subspace $L^2_\sigma (\Omega)$ of $L^2(\om)$.

%
%
In the mathematical point of view,  
difficulties of this problem are mainly caused by the chemotaxis terms 
$-\chi_1 \nabla \cdot (n_1\nabla c)$, $-\chi_2 \nabla \cdot (n_2\nabla c)$, 
the competitive kinetics 
$\mu_1 n_1(1-n_1-a_1n_2)$, $\mu_2 n_2(1-a_2n_1 -n_2)$ 
and the Navier--Stokes equation 
which is the fourth equation in 
\eqref{P}. In the case that $\n2=0$, 
global existence of weak solutions, and their eventual smoothness 
and stabilization were shown in \cite{Johannes(2016)}. 
On the other hand, in the case that $n_2 \neq 0$ 
and $\Omega \subset \mathbb{R}^2$, 
global existence and boundedness of classical solutions to 
\eqref{P} have been attained (\cite{HKMY(2017)}). 
Moreover, 
in the case that $\kappa = 0$ in \eqref{P}, 
which namely means that 
the fourth equation in \eqref{P} 
is the {\it Stokes} equation, 
global existence and stabilization 
can be found in \cite{CKM1}; 
in the case that $\kappa = 0$ in \eqref{P} and that 
$-(\alpha n_1 + \beta n_2)c$ is replaced 
with $+ \alpha n_1 + \beta n_2 - c$, 
global existence and boundedness of classical solutions to 
the Keller--Segel-Stokes system 
and their asymptotic behaviour are found in \cite{CKM2}. 

As we mentioned above, 
global classical solutions are found 
in \eqref{P} in the 2D setting 
and the case that $\kappa = 0$. 
However, global existence of solutions in 3-dimensional setting 
has not been attained. 
Thus the main purposes of this paper is 
to obtain global existence of solutions to 
\eqref{P} in the case that $\Omega \subset \mathbb{R}^3$.  
Nevertheless, because of the difficulties of 
the Navier--Stokes equation, we can not expect global existence 
of {\it classical solutions} to \eqref{P} in the 3-dimensional case. 
Therefore our goal is to obtain global existence 
of {\it weak solutions} to \eqref{P} 
in the following sense. 

%
%
%

\smallskip

\begin{df}\label{def;weaksol}
  A quadruple $(\n1,\n2,c,u)$ is called a $($global$)$ weak solution
  of \eqref{P} if
  \begin{align*}
    \n1, \n2
    &\in L^2_{\rm loc}([0,\infty);L^2(\om))
     \cap L^{\frac{4}{3}}_{\rm loc}([0,\infty);W^{1,\frac{4}{3}}(\om)),
   \\
    c
    &\in L^2_{\rm loc}([0,\infty);W^{1,2}(\om)),
   \\
    u
    &\in L^2_{\rm loc}([0,\infty);W^{1,2}_{0,\sigma}(\om))
  \end{align*}
  and for all $T>0$ the identities
  \begin{align*}
    &-\int^\infty_0\!\!\!\!\int_\om\n1\vp_t
     -\int_\om n_{1,0}\vp(\cdot,0)
     -\int^\infty_0\!\!\!\!\int_\om\n1u\cdot\na\vp
   \\
    &\h{7.0mm}=-\int^\infty_0\!\!\!\!\int_\om\na\n1\cdot\na\vp
      +\chi_1\int^\infty_0\!\!\!\!\int_\om\n1\na c\cdot\na\vp
      +\mu_1\int^\infty_0\!\!\!\!\int_\om\n1(1-\n1-a_1\n2)\vp,
   \\[2.0mm]
    &-\int^\infty_0\!\!\!\!\int_\om\n2\vp_t
     -\int_\om n_{2,0}\vp(\cdot,0)
     -\int^\infty_0\!\!\!\!\int_\om\n2u\cdot\na\vp
   \\
    &\h{7.0mm}=-\int^\infty_0\!\!\!\!\int_\om\na\n2\cdot\na\vp
      +\chi_2\int^\infty_0\!\!\!\!\int_\om\n2\na c\cdot\na\vp
      +\mu_2\int^\infty_0\!\!\!\!\int_\om\n2(1-a_2\n1-\n2)\vp,
   \\[2.0mm]
    &-\int^\infty_0\!\!\!\!\int_\om c\vp_t
     -\int_\om c_0\vp(\cdot,0)
     -\int^\infty_0\!\!\!\!\int_\om cu\cdot\na\vp
   \\
    &\h{7.0mm}=-\int^\infty_0\!\!\!\!\int_\om\na c\cdot\na\vp
      -\int^\infty_0\!\!\!\!\int_\om(\alpha\n1+\beta\n2)c\vp,
   \\[2.0mm]
    &-\int^\infty_0\!\!\!\!\int_\om u\cdot\psi_t
     -\int_\om u_0\cdot\psi(\cdot,0)
     -\int^\infty_0\!\!\!\!\int_\om u\otimes u\cdot\na\psi
   \\
    &\h{7.0mm}=-\int^\infty_0\!\!\!\!\int_\om\na u\cdot\na\psi
     +\int^\infty_0\!\!\!\!\int_\om(\gamma\n1+\d\n2)\na\psi\cdot\na\Phi
  \end{align*}
  hold for all $\vp\in C^{\infty}_0(\overline{\om}\times [0,\infty))$
  and all $\psi\in C^{\infty}_{0,\sigma}(\om \times [0,\infty))$,
  respectively.
\end{df}

Now the main results read as follows. 
The first theorem is concerned with 
global existence of weak solutions to \eqref{P}. 

\smallskip

%
\begin{thm}\label{mainthm1}
  Let $\om\subset\mathbb{R}^3$ be a bounded smooth domain 
  and let $\chi_1,\chi_2,a_1,a_2\geq 0$ and 
  $\mu_1,\mu_2,\alpha,\beta,\gamma,\d>0$.
  Assume that $n_{1,0},n_{2,0},c_0,u_0$ satisfy \eqref{condi;ini1}
  with some $q>3$ and $\theta \in(\frac{3}{4},1)$ 
  and $\Phi\in C^{1+\lambda}(\overline{\om})$ 
  for some $\lambda\in (0,1)$.
  Then there is a weak solution of \eqref{P},
  which can be approximated by a sequence of solutions 
  $(\en1,\en2,c_\e,u_\e)$ of 
  \eqref{Pe} $($see Section $\ref{section2})$ 
  in a pointwise manner.
\end{thm}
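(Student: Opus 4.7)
The plan is to prove existence of a weak solution via an approximation-compactness scheme: start from the regularized system \eqref{Pe} introduced in Section~\ref{section2}, establish $\e$-uniform a priori estimates on its classical solutions $(\en1,\en2,c_\e,u_\e)$ sufficient both to extend them globally on $(0,\etmax)$ and to pass to the limit $\e\downarrow 0$, and then verify that the resulting pointwise limit satisfies each of the four integral identities in Definition~\ref{def;weaksol}. Since \eqref{Pe} is tailored so that it admits a unique classical solution, the heart of the argument is to derive bounds independent of $\e$.

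First I would build the a priori estimates in the following order. Testing the first two equations of \eqref{Pe} with the constant $1$ and exploiting the quadratic absorption in the Lotka--Volterra kinetics yields $\en{i}\in L^\infty_{\rm loc}([0,\infty);L^1(\om))$ together with $\en{i}\in L^2_{\rm loc}([0,\infty);L^2(\om))$ from the identity $\frac{d}{dt}\int_\om \en{i}+\mu_i\int_\om \en{i}^{\,2}\le C\int_\om \en{i}$. The maximum principle applied to the $c_\e$-equation (whose zero-order term is non-positive) provides $0\le c_\e\le \|c_0\|_{L^\infty(\om)}$ pointwise, and testing that equation with $c_\e$ yields $\na c_\e\in L^2_{\rm loc}([0,\infty);L^2(\om))$. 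The Navier--Stokes energy estimate for the velocity, combined with $\Phi\in C^1(\overline\om)$, the $L^2$-bound on $\en{i}$ and Gronwall's inequality, then furnishes $u_\e\in L^\infty_{\rm loc}([0,\infty);L^2_\sigma(\om))\cap L^2_{\rm loc}([0,\infty);W^{1,2}_{0,\sigma}(\om))$. Finally, to secure the $W^{1,4/3}$-regularity demanded in Definition~\ref{def;weaksol}, I would test the first two equations with $\ln(\en{i}+1)$ to bound $\int_0^T\!\!\int_\om \frac{|\na \en{i}|^2}{\en{i}+1}$, and then close via the H\"older inequality $\int|\na \en{i}|^{4/3}\le \bigl(\int\frac{|\na \en{i}|^2}{\en{i}+1}\bigr)^{2/3}\bigl(\int(\en{i}+1)^2\bigr)^{1/3}$, using the $L^2$-bound already in hand.

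With these estimates secured I would derive weak bounds on $(\en{i})_t,(c_\e)_t,(u_\e)_t$ in suitable dual Sobolev spaces by inspecting \eqref{Pe} term by term, then invoke the Aubin--Lions lemma to extract a subsequence along which $\en{i}\to \n{i}$ and $c_\e\to c$ strongly in $L^2_{\rm loc}([0,\infty);L^2(\om))$ (and pointwise a.e.\ in $\om\times(0,\infty)$), $u_\e\to u$ strongly in $L^2_{\rm loc}([0,\infty);L^2(\om))$, and $\na c_\e\rightharpoonup\na c$ weakly in $L^2$. Passing to the limit in the weak formulation is then routine for the linear terms, while the nonlinear contributions $\en{i}\na c_\e$, $(\alpha\en{1}+\beta\en{2})c_\e$, $\en{i}^{\,2}$, $\en{1}\en{2}$ and $u_\e\otimes u_\e$ are handled by combining strong convergence of one factor with weak convergence of the other.

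The main technical obstacle is the tension in 3D between the chemotactic cross-diffusion $-\chi_i\na\cdot(\n{i}\na c)$ and the convective Navier--Stokes term $(u\cdot\na)u$: the best 3D theory for the velocity only yields the weak-energy level $L^\infty(L^2)\cap L^2(H^1)$, which in turn prevents pushing the $\en{i}$-estimates past the $L^{4/3}(W^{1,4/3})$-scale appearing in Definition~\ref{def;weaksol}. Verifying that this rock-bottom regularity is nevertheless just enough to pass to the limit in $\en{i}\na c_\e$ (which needs strong $L^2(L^2)$ convergence of $\en{i}$ against weak $L^2$ convergence of $\na c_\e$) and in $u_\e\otimes u_\e$ (which needs strong $L^2(L^2)$ convergence of $u_\e$, hence an $\e$-uniform estimate on $(u_\e)_t$ in a sufficiently negative space produced via the Helmholtz projection) is where the bulk of the work concentrates.
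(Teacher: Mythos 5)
Your overall architecture (regularize via \eqref{Pe}, derive $\e$-uniform bounds, Aubin--Lions, pass to the limit) matches the paper's, but the system of a priori estimates you propose does not close, and this is exactly where the paper has to work. The obstruction sits in your $\log$-test: when you test the $\en{i}$-equation with $\ln(\en{i}+1)$, the cross-diffusion term produces
\begin{align*}
\chi_i\int_\om\frac{\en{i}}{(1+\e\en{i})(\en{i}+1)}\,\na c_\e\cdot\na\en{i},
\end{align*}
and after Young's inequality you are left needing to control something like $\int_\om\en{i}\,|\na c_\e|^2$, i.e.\ an $L^4$-type space--time bound on $\na c_\e$. Your estimates for $c_\e$ (maximum principle plus testing with $c_\e$) only give $\na c_\e\in L^2$, which is not enough. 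The paper obtains the missing dissipation $\int\!\!\int\frac{|\na c_\e|^4}{c_\e^3}$ from the term $\frac{\chi}{2}\int_\om\frac{|\na c_\e|^2}{c_\e}$ in the functional ${\cal F}_\e$ of Lemma~\ref{lem:2.10}; the evolution of that term in turn generates $+C\int_\om|\na u_\e|^2$ with the wrong sign, which forces the inclusion of $\overline{k}\chi\int_\om|u_\e|^2$, and the chemotaxis and consumption cross terms have to be played off against one another through the choice of $\chi$. In other words, the estimates for $\en{i}$, $c_\e$ and $u_\e$ cannot be derived sequentially and independently as you propose; they must be assembled into a single coupled Lyapunov functional whose dissipation terms dominate all the bad cross terms simultaneously. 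That coupled differential inequality is the actual content of the existence proof.

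A second, related gap concerns the limit passage. To pass to the limit in the quadratic kinetic terms $\en{i}^2$ and $\en1\en2$, and to justify your claimed strong convergence $\en{i}\to\n{i}$ in $L^2_{\rm loc}([0,\infty);L^2(\om))$, a uniform space--time $L^2$ bound is not sufficient: $\en{i}^2$ bounded in $L^1$ together with a.e.\ convergence does not preclude concentration. The paper's functional supplies the extra bound $\int\!\!\int\en{i}^2\log\en{i}\le C$ (see \eqref{tool2}), which makes $\en{i}^2$ uniformly integrable so that Vitali's theorem upgrades a.e.\ convergence to strong $L^2$ convergence. Finally, you should say explicitly how your bounds rule out the blow-up alternative \eqref{extension}, which involves $L^\infty$, $W^{1,q}$ and $A^\theta$ norms well above the energy level; the paper does this (admittedly tersely) from the full family of estimates \eqref{tool1}--\eqref{tool4} generated by ${\cal F}_\e$, none of which are available at the level of regularity you derive.
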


\smallskip

The second theorem gives eventual smoothness and stabilization. 

\smallskip

\begin{thm}\label{mainthm2}
  Let the assumption of Theorem $\ref{mainthm1}$ be satisfied. 
  Then there are $T>0$ and $\alpha' \in(0,1)$ such that the solution
  $(\n1,\n2,c,u)$ given by Theorem $\ref{mainthm1}$ satisfies
  \begin{align*}
    \n1,\n2,c
    \in C^{2+\alpha',1+\frac{\alpha'}{2}}(\overline{\om}\times[T,\infty)),
    \h{3.0mm}
    u
    \in C^{2+\alpha',1+\frac{\alpha'}{2}}(\overline{\om}\times[T,\infty)).
  \end{align*}
  Moreover, the solution of $(\ref{P})$ 
  has the following properties\/{\rm :}
  \begin{itemize}
  \item[{\rm(i)}]
    Assume that $a_1,a_2\in (0,1)$. Then 
    \begin{align*}
      \n1(\cdot,t)\to N_1,      \h{3.0mm}
      \n2(\cdot,t)\to N_2,      \h{3.0mm}
      c(\cdot,t)\to0,          \h{3.0mm}
      u(\cdot,t)\to0           \h{3.0mm}
      \mbox{in}\ L^\infty(\om) 
    \end{align*}
    as $t\to\infty$, where 
    \begin{align*}
    N_1:=\frac{1-a_1}{1-a_1a_2}, \h{3.0mm}
    N_2:=\frac{1-a_2}{1-a_1a_2}.
    \end{align*}
  \item[{\rm(ii)}]
    Assume that $a_1\geq 1 > a_2$. Then
    \begin{align*}
      \n1(\cdot,t)\to 0,       \h{3.0mm}
      \n2(\cdot,t)\to 1,       \h{3.0mm}
      c(\cdot,t)\to 0,         \h{3.0mm}
      u(\cdot,t)\to 0          \h{3.0mm}
      \mbox{in}\ L^\infty(\om) \h{3.0mm}
    \end{align*}
   as $t\to \infty$. 
  \end{itemize}
\end{thm}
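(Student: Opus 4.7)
The plan is to work at the level of the approximate solutions $(\en1,\en2,c_\e,u_\e)$ used in Theorem \ref{mainthm1}, derive a Lyapunov-type energy inequality giving long-time $L^2$-integrability of $\en{i}-N_i$, $c_\e$ and $u_\e$, bootstrap this into uniform parabolic H\"older bounds on $\overline{\om}\times[T,\infty)$, and finally transfer both the regularity and the $L^\infty$-convergence to the limit by compactness. This mirrors the scheme used in \cite{HKMY(2017)} in two dimensions and in \cite{Johannes(2016)} for the one-species case, with additional care required because in 3D we only dispose of weak solutions.

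In case (i), $a_1,a_2\in(0,1)$, I would differentiate along the approximate system the functional
\begin{align*}
E_\e(t):=\int_\om\!\Big(\en1-N_1-N_1\ln\tfrac{\en1}{N_1}\Big)+A\!\int_\om\!\Big(\en2-N_2-N_2\ln\tfrac{\en2}{N_2}\Big)+B\!\int_\om c_\e^2+C\!\int_\om|u_\e|^2,
\end{align*}
choosing $A,B,C>0$ so that the cross terms coming from $\chi_i\na\cdot(\en{i}\na c_\e)$, the consumption term $(\alpha\en1+\beta\en2)c_\e$, and the buoyancy forcing $(\gamma\en1+\d\en2)\na\Phi$ are all absorbed into the competitive dissipation produced by $\mu_i\en{i}(\cdots)$ and the diffusive dissipation from $\D\n{i,\e}$, $\D c_\e$, $\D u_\e$. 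After a standard Poincar\'e--type manipulation using $\na\cdot u_\e=0$ this should yield
\begin{align*}
\tfrac{d}{dt}E_\e(t)\le-\eta\int_\om\!\Big[(\en1-N_1)^2+(\en2-N_2)^2+c_\e^2+|u_\e|^2\Big]
\end{align*}
with $\eta>0$ independent of $\e$, hence a uniform bound on $\int_0^\infty\!\int_\om[\cdot]$. In case (ii) the analogous functional is built on $\en1$ itself (since $N_1=0$) and on $\en2-1-\ln\en2$, using $\int_\om\en1$ as the dissipative quantity for the first species.

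Once this $L^2(0,\infty;L^2(\om))$ decay is established, I would combine it with the uniform-in-$\e$ parabolic estimates already used in the proof of Theorem \ref{mainthm1} to show that for large $t_0$ the quantities $\norm{\en{i}(\cdot,t_0)-N_i}_{L^p}$ (for some $p>3$), $\norm{c_\e(\cdot,t_0)}_{W^{1,q}}$ and $\norm{A^\theta u_\e(\cdot,t_0)}_{L^2}$ are arbitrarily small, uniformly in $\e$. From $t_0$ onward the standard small-data local theory for the chemotaxis--Navier--Stokes system then furnishes a uniform $C^{2+\alpha',1+\frac{\alpha'}{2}}(\overline{\om}\times[T,\infty))$-bound; Arzel\`a--Ascoli lets this pass to the limit $\e\to 0$, giving the asserted eventual regularity. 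The $L^\infty$-stabilization in (i) and (ii) is then obtained by interpolating the integral $L^2$-decay of step~1 against the uniform parabolic H\"older bound just derived, together with the strong maximum principle to identify the limits.

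The step I expect to be hardest is the construction of the Lyapunov functional in case (ii): because $N_1=0$, the first species contributes no quadratic damping of the form $\mu_1(\en1-N_1)^2$, so one must carefully exploit the decay of $c_\e$ (which itself is driven by $\en1$ through $-(\alpha\en1+\beta\en2)c_\e$) and the full dissipation of $\en2$ around $1$ to absorb the chemotactic cross term $\chi_1\int_\om\en1\na c_\e\cdot\na(\ln\en1)$. Securing the right sign in this inequality, uniformly in $\e$, is where the main analytic work lies; everything downstream reduces to standard parabolic smoothing and compactness.
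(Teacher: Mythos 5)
Your overall strategy coincides with the paper's: Lotka--Volterra-type quasi-Lyapunov functionals evaluated along the approximate solutions of \eqref{Pe} (with $\int_\om(\en{i}-N_i\log\frac{\en{i}}{N_i})$ in case (i), and $\int_\om\en1+\int_\om(\en2-\log\en2)$ plus $\frac{B}{2}\int_\om c_\e^2$ in case (ii)), yielding $\e$-uniform bounds on $\int_0^\infty\!\!\int_\om(\en{i}-n_{i,\infty})^2$, followed by an eventual-regularity bootstrap in the spirit of \cite{Johannes(2016)} and passage to the limit $\e\searrow0$ by compactness. Two remarks. First, the differential inequality you claim, with $-\eta\int_\om c_\e^2$ on the right-hand side, is not obtainable from the functional as written: the dissipation of $\frac{B}{2}\int_\om c_\e^2$ is $-B\int_\om|\na c_\e|^2-B\int_\om c_\e^2\,\frac1\e\log(1+\e(\alpha\en1+\beta\en2))$, and neither term dominates $\eta\int_\om c_\e^2$ --- Poincar\'e is unavailable ($c_\e$ satisfies Neumann conditions and need not have zero mean), and a uniform pointwise lower bound on $\alpha\en1+\beta\en2$ is not known at this stage (in case (ii) it would be circular, since $\en1\to0$). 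The paper does not claim $L^2$-decay of $c_\e$ from the functional; instead it derives eventual smallness $\norm{c_\e(\cdot,t)}_{L^\infty(\om)}<\eta$ separately, via the mass-type monotonicity argument of \cite[Lemmas 3.4--3.5]{Johannes(2016)}, and treats the fluid in a second step, showing $\int_0^\infty\!\!\int_\om|\na u_\e|^2\le C$ from the already-established decay of $\en{i}-n_{i,\infty}$ (your inclusion of $\int_\om|u_\e|^2$ in the functional is harmless but requires the same input). Second, your anticipated difficulty in case (ii) with the cross term $\chi_1\int_\om\en1\na c_\e\cdot\na(\log\en1)$ does not arise: since the species-1 contribution to the functional is simply $\int_\om\en1$ (no logarithm, as $N_1=0$), the chemotaxis term for $\en1$ is a divergence and integrates to zero; the only chemotactic cross term comes from $-\int_\om\log\en2$ and is absorbed in the standard way. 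With these corrections your argument reduces to the one in the paper.
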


\smallskip

The proofs of the main theorems are based on the arguments 
in \cite{Johannes(2016)}.  
The strategies for the proofs is to 
construct energy estimates for the solution 
$(\en1,\en2,c_\e,u_\e)$ of \eqref{Pe}. 
In Section \ref{section2} we consider the energy function $\cal F_\e$ defined as
\begin{align*}
   {\cal F}_\e:=
      \int_\om\en1\log\en1
      +\int_\om\en2\log\en2
      +\frac{\chi}{2}\int_\om\frac{|\na c_\e |^2}{c_\e }
      +k_4\chi\int_\om|u_\e |^2
\end{align*}
with some constant $\chi>0$. 
Noting that 
for all $\rho,\xi_i>0$ there exists $C>0$ such that
  \begin{align*}
      &\int_\om\na c_\e\cdot\na\en{i}
        \bigg(
          \frac{\chi_i}{1+\e\en{i}}
         -\frac{\chi\alpha\ (\mbox{or}\ \chi\beta)}{1+\e(\alpha\en1+\beta\en2)}
        \bigg)
    \\
      &\leq
       \rho\int_\om\frac{|\na c_\e|^4}{c_\e^3}
      +\xi_i\int_\om\frac{|\na\en{i}|^2}{\en{i}}
      +C\int_\om\en{i}^2
    \quad (i=1,2),
  \end{align*}
  which did not appear in the previous work \cite{Johannes(2016)}, 
from the estimate for the energy function $\cal F_\e$ 
we obtain global-in-time solvability of approximate solutions.  
Then we moreover see convergence as $\e \searrow 0$. 
Furthermore, in Section \ref{section3}, 
according to an argument similar to \cite{HKMY(2017)},
by putting
  \begin{align*}
   {\cal G}_{\e,B}:=\int_\om\left(\en1-N_1\log\frac{\en1}{N_1}\right)
                    +\int_\om\left(\en2-N_2\log\frac{\en2}{N_2}\right)
                    +\frac{B}{2}\int_\om c_\e^2
  \end{align*}
with suitable constant $B>0$ 
and establishing the H\"{o}lder estimates for 
the solution of \eqref{P} through the estimate for 
the energy function 
${\cal G}_{\e,B}$, we can discuss convergence of 
$\big(\n1(\cdot,t),\n2(\cdot,t),c(\cdot,t),u(\cdot,t)\big)$ 
as $t\to\infty$.
%
%

\section{Proof of Theorem 1.2 (Global existence)}\label{section2}
We will start by considering an approximate problem with parameter $\e >0$, 
namely:
\begin{equation}\label{Pe}
  \begin{cases}
     (\en1)_t+u_\e\cdot\na\en1
     =\D\en1
     -\chi_1\na\cdot\Big(\frac{\en1}{1+\e\en1}\na c_\e\Big)
     +\mu_1\en1(1-\en1-a_1\en2),
  \\[2mm]
     (\en2)_t+u_\e\cdot\na\en2
     =\D\en2
     -\chi_2\na\cdot\Big(\frac{\en2}{1+\e\en2}\na c_\e\Big)
     +\mu_2\en2(1-a_2\en1-\en2),
  \\[2mm]
     (c_\e)_t+u_\e\cdot\na c_\e
     =\D c_\e
     -c_\e\frac{1}{\e}\log\big(1+\e(\alpha\en1+\beta\en2)\big),
  \\[2mm]
     (u_\e)_t+(Y_\e u_\e\cdot\na)u_\e
     =\D u_\e
     +\na P_\e
     +(\gamma\en1+\d\en2)\na\Phi,
     \quad\na\cdot u_\e=0,
  \\[2mm]
     \partial_\nu\en1|_{\partial\om}
     =\partial_\nu\en2|_{\partial\om}
     =\partial_\nu c_\e|_{\partial\om}=0,
     \quad u_\e|_{\partial\om}=0,
  \\[2mm]
     \en1(\cdot,0)=n_{1,0},\quad
     \en2(\cdot,0)=n_{2,0},\quad
     c_\e(\cdot,0)=c_0,\quad
     u_\e(\cdot,0)=u_0,
  \end{cases}
\end{equation}
where $Y_\e=(1+\e A)^{-1}$,
and provide estimates for its solutions. 
We first give the following result which states 
local existence in \eqref{P}. 

\smallskip

%
\begin{lemma}
  Let $\chi_1,\chi_2,a_1,a_2\geq0$, 
  $\mu_1,\mu_2,\alpha,\beta,\gamma,\d>0$, and
  $\Phi\in C^{1+\lambda}(\overline{\om})$ for some $\lambda \in (0,1)$ 
  and  assume that $n_{1,0},n_{2,0},c_0,u_0$ satisfy \eqref{condi;ini1} 
  with some $q>3,\theta \in(\frac{3}{4},1)$.  
  Then for all $\e > 0$ there are $\etmax$ and 
  uniquely determined functions\/{\rm :}
    \begin{align*}
       \en1,\en2
      &\in C^0(\overline{\om}\times[0,\etmax))
       \cap C^{2,1}(\overline{\om}\times(0,\etmax)),
    \\
       c_\e
      &\in C^0(\overline{\om}\times[0,\etmax))
       \cap C^{2,1}(\overline{\om}\times(0,\etmax))
       \cap L^\infty_{\rm loc}([0,\etmax);W^{1,q}(\om)),
    \\
       u_\e
      &\in C^0(\overline{\om}\times[0,\etmax)) 
       \cap C^{2,1}(\overline{\om}\times(0,\etmax)),
    \end{align*}
  which together with some 
  $P_\e\in C^{1,0}(\overline{\om}\times(0,\etmax))$ 
  solve \eqref{Pe} classically. 
  Moreover, $\en1$, $\en2$ and $c_\e$ are positive 
  and the following alternative holds\/{\rm :} 
  $\etmax=\infty$ or
  \begin{align}\label{extension}
     \norm{n_{1,\e}(\cdot,t)}_{L^{\infty}(\om)}
    +\norm{n_{2,\e}(\cdot,t)}_{L^{\infty}(\om)}
    +\norm{c_\e(\cdot,t)}_{W^{1,q}(\om)}
    +\norm{A^\theta u_\e(\cdot,t)}_{L^2(\om)}
    \to  \infty
  \end{align}
as $t\nearrow \etmax$.
\end{lemma}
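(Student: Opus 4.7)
The plan is to establish local existence by a contraction mapping argument applied to the fully regularized system \eqref{Pe}, following the standard strategy used in chemotaxis-fluid problems (e.g.\ \cite{Johannes(2016)}). The three regularizations in \eqref{Pe} are tailored to remove specific obstructions to short-time solvability: the factor $\frac{1}{1+\e\en{i}}$ makes the chemotactic coefficient bounded by $1/\e$; the term $-c_\e\frac{1}{\e}\log(1+\e(\alpha\en1+\beta\en2))$ keeps the consumption coefficient in the $c$-equation in $L^\infty$; and the Yosida approximation $Y_\e=(1+\e A)^{-1}$ ensures $Y_\e u_\e\in D(A)$ with a bound depending only on $\|u_\e\|_{L^2_\sigma}$, making the convective nonlinearity in the fluid equation Lipschitz in the base space.

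Accordingly, I would fix $R>0$ and $T\in(0,1]$ to be chosen later, introduce a closed ball $\mathcal{B}_{R,T}$ in the Banach space
\begin{align*}
X_T:=C^0([0,T];C(\overline{\om}))\times C^0([0,T];C(\overline{\om}))\times C^0([0,T];W^{1,q}(\om))\times C^0([0,T];D(A^\theta))
\end{align*}
centred at $(n_{1,0},n_{2,0},c_0,u_0)$, and define a map $\Psi$ on $\mathcal{B}_{R,T}$ by solving, for given $(\bar n_1,\bar n_2,\bar c,\bar u)\in\mathcal{B}_{R,T}$, the four decoupled linear problems obtained by substituting the barred quantities into every nonlinear term. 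Standard parabolic theory for the $n_{i,\e}$ and $c_\e$ equations, together with the analyticity of the Stokes semigroup on $L^2_\sigma(\om)$ (in particular, the smoothing estimate $\|A^\theta e^{-tA}v\|_{L^2}\le Ct^{-\theta}\|v\|_{L^2}$), yield solutions in the respective regularity classes. Choosing first $R$ large compared to the data and then $T$ sufficiently small, $\mathcal{B}_{R,T}$ becomes invariant under $\Psi$ and $\Psi$ becomes a contraction, so Banach's fixed-point theorem produces a unique mild solution of \eqref{Pe}.

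Classical regularity $\en1,\en2,c_\e,u_\e\in C^{2,1}(\overline{\om}\times(0,\etmax))$ is then obtained by bootstrapping: the mild solution lies in Hölder spaces away from $t=0$ by the smoothing properties of the heat and Stokes semigroups, and parabolic Schauder theory together with reconstruction of the pressure $P_\e$ from the Stokes equation upgrades this to classical regularity. Positivity of $\en1,\en2,c_\e$ follows from the positive data via the parabolic maximum principle, using that the regularized chemotactic and consumption coefficients are smooth and bounded. The extension alternative \eqref{extension} is the usual one: defining $\etmax$ as the supremum of times of existence, if the left-hand side of \eqref{extension} stayed bounded on $[0,\etmax)$ with $\etmax<\infty$, the local result could be reapplied at some $t_0$ close to $\etmax$ with initial data $(\en1,\en2,c_\e,u_\e)(\cdot,t_0)$, contradicting maximality.

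The main technical point is the choice of base space: one must verify that all four equations close up under $\Psi$ in the product norm. The crucial ingredient is the embedding $D(A^\theta)\hookrightarrow L^\infty(\om)$ valid for $\theta>\tfrac{3}{4}$ in three dimensions, which lets the $A^\theta$-component of $u_\e$ control the drift in the $\en{i}$ and $c_\e$ equations; the condition $q>3$ plays an analogous role by making $\nabla c_\e$ bounded via $W^{1,q}\hookrightarrow L^\infty$. Beyond this choice, all estimates are routine, and the lemma is essentially a minor variant of the local existence statements in \cite{Johannes(2016)} and \cite{HKMY(2017)}.
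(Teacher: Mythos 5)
The paper states this local-existence lemma without proof, deferring implicitly to the standard fixed-point arguments of \cite{Johannes(2016)} and \cite{HKMY(2017)}; your contraction-mapping construction in $C^0([0,T];C(\overline{\om}))^2\times C^0([0,T];W^{1,q}(\om))\times C^0([0,T];D(A^\theta))$, with the embeddings $W^{1,q}\hookrightarrow L^\infty$ for $q>3$ and $D(A^\theta)\hookrightarrow L^\infty$ for $\theta>\tfrac34$, the maximum principle for positivity, and the usual extensibility argument, is exactly that standard route. The proposal is correct and takes essentially the same approach the paper relies on.
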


\smallskip

We next show the following lemma which holds a key 
for the proof of Theorem \ref{mainthm1}. 
This lemma derives the estimate for the energy function. 

\smallskip

\begin{lemma}\label{lem:2.10}
For all $\xi_1,\xi_2\in(0,1)$ and $\chi>0$ there are 
$C,\overline{C},\widetilde{C},k,\overline{k}>0$ such that
  \begin{align*}
     {\cal F}_\e:=
      \int_\om\en1\log\en1
     +\int_\om\en2\log\en2
     +\frac{\chi}{2}\int_\om\frac{|\na c_\e|^2}{c_\e}
     +\overline{k}\chi\int_\om|u_\e|^2
  \end{align*}
satisfies
  \begin{align*}
   \frac{d}{dt}{\cal F}_\e
   \leq
   &-\frac{\mu_1}{4}\int_\om\en1^2\log\en1
    -\frac{\mu_2}{4}\int_\om\en2^2\log\en2
   \\
   &-(1-\xi_1)\int_\om\frac{|\na\en1|^2}{\en1}
    -(1-\xi_2)\int_\om\frac{|\na\en2|^2}{\en2}
    +C\int_\om\en1^2
    +\overline{C}\int_\om\en2^2
    +\widetilde{C}
   \\
   &-k\int_\om c_\e|D^2\log c_\e|^2
    -k\int_\om\frac{|\na c_\e|^4}{c^3_\e}
    -k\int_\om|\na u_\e|^2 
  \end{align*}
on $(0,\etmax)$ for all $\e>0$.
\end{lemma}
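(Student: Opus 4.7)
The plan is to differentiate each summand of ${\cal F}_\e$ in time along solutions of \eqref{Pe}, combine the resulting identities, and invoke the key cross-term inequality singled out in the introduction (together with a standard pointwise bound relating $\int_\om \frac{|\na c_\e|^4}{c_\e^3}$ to $\int_\om c_\e|D^2\log c_\e|^2$) to absorb all indefinite contributions into the dissipative terms on the right-hand side.

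First I would compute $\frac{d}{dt}\int_\om \en{i}\log \en{i}$ for $i=1,2$ by testing the $i$-th equation of \eqref{Pe} against $\log \en{i}+1$. The convective contribution vanishes thanks to $\na\cdot u_\e=0$ and $u_\e|_{\partial\om}=0$; the Laplacian yields the dissipation $-\int_\om \frac{|\na \en{i}|^2}{\en{i}}$ after integration by parts (using $\partial_\nu \en{i}=0$); the chemotaxis term produces the coupling $\chi_i\int_\om \frac{\na \en{i}\cdot\na c_\e}{1+\e \en{i}}$; and the logistic term, after expansion of $(\log \en{i}+1)\en{i}(1-\en{i}-a_i\en{j})$, contributes the desired $-\mu_i\int_\om \en{i}^2\log \en{i}$ plus remainders in $\en{i}\log \en{i}$, $\en{i}\en{j}\log \en{i}$ and $\en{i}^2$, which I would handle by the elementary bound $|s\log s|\leq s^2+C$ and Young's inequality, thereby reducing the coefficient of $-\int_\om \en{i}^2\log \en{i}$ from $\mu_i$ to $\mu_i/4$ at the price of an $\int_\om \en{i}^2$ and a constant.

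Next, differentiating $\frac{\chi}{2}\int_\om \frac{|\na c_\e|^2}{c_\e}$ along the third equation, I would use the well-known pointwise identity (as in \cite{Johannes(2016)}) to extract the good dissipation $-\chi\int_\om c_\e|D^2\log c_\e|^2$, a boundary integral controllable by the regularity of $\partial\om$, a $u_\e$-transport remainder bounded in terms of $\int_\om |\na u_\e|^2$ and $\int_\om \frac{|\na c_\e|^4}{c_\e^3}$, and a cross term $-\chi\int_\om \na c_\e\cdot\na g_\e$ with $g_\e=\frac{1}{\e}\log(1+\e(\alpha\en1+\beta\en2))$, whose gradient equals $\frac{\alpha \na \en1+\beta \na \en2}{1+\e(\alpha \en1+\beta \en2)}$. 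Testing the fourth equation of \eqref{Pe} against $u_\e$ kills the nonlinear convective contribution because $\na\cdot(Y_\e u_\e)=0$, leaving $-\int_\om |\na u_\e|^2$ and a gravitational forcing that I would estimate via $\|\na\Phi\|_\infty$, H\"{o}lder's and Poincar\'{e}'s inequalities, consuming a small fraction of $\int_\om |\na u_\e|^2$ and producing $C(\int_\om \en1^2+\int_\om \en2^2)$.

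Adding the four identities weighted by $1,1,\chi/2,\overline{k}\chi$, the chemotaxis coupling from the $\en{i}\log \en{i}$ derivatives combines with the $\chi\na c_\e\cdot\na g_\e$ contribution into precisely the cross term $\int_\om \na c_\e\cdot\na \en{i}\bigl(\frac{\chi_i}{1+\e\en{i}}-\frac{\chi\alpha\ (\mbox{or}\ \chi\beta)}{1+\e(\alpha\en1+\beta\en2)}\bigr)$ highlighted in the introduction; applying the displayed inequality with sufficiently small $\rho$ and with the given $\xi_i\in(0,1)$ trades it for $\rho\int_\om \frac{|\na c_\e|^4}{c_\e^3}$ (absorbed into the $c_\e$-dissipation), $\xi_i\int_\om \frac{|\na \en{i}|^2}{\en{i}}$ (absorbed into the $\en{i}$-dissipation) and $C\int_\om \en{i}^2$. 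Finally, choosing $\chi$ large and $\overline{k}$ so that the $u_\e$-transport remainder and the gravitational forcing are swallowed by the dissipations $-\chi\int_\om c_\e|D^2\log c_\e|^2$ and $-\overline{k}\chi\int_\om |\na u_\e|^2$ yields the claimed bound. I expect the main obstacle to be the simultaneous tuning of $\chi,\overline{k},\rho,\xi_i$ against the two distinct sensitivities $\chi_1,\chi_2$ in the presence of the non-standard logarithmic consumption term in the $c_\e$ equation; the displayed cross-term inequality is precisely the new ingredient, absent in \cite{Johannes(2016)}, that makes this balancing feasible in the two-species setting.
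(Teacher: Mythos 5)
Your proposal follows essentially the same route as the paper's proof: testing the $n_{i,\e}$-equations against $\log\en{i}+1$, the Lankeit-type identity for $\frac{\chi}{2}\int_\om\frac{|\na c_\e|^2}{c_\e}$, testing the fluid equation against $u_\e$, and then absorbing the combined cross term $\int_\om\na c_\e\cdot\na\en{i}\bigl(\frac{\chi_i}{1+\e\en{i}}-\frac{\chi\alpha\,(\text{or }\chi\beta)}{1+\e(\alpha\en1+\beta\en2)}\bigr)$ via the two-step Young inequality into $\rho\int_\om\frac{|\na c_\e|^4}{c_\e^3}+\xi_i\int_\om\frac{|\na\en{i}|^2}{\en{i}}+C\int_\om\en{i}^2$, exactly as in the paper. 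One minor remark: $\chi>0$ is a given parameter of the lemma and need not be taken large --- for any fixed $\chi$ it suffices to choose $\overline{k}$ so that $\frac{\chi}{2}C_4-\overline{k}\chi<0$ with margin, which is what the paper does.
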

%
\begin{proof}
%
%
Noting, the boundedness of $s(1-s)$ and $s(1-\frac{s}{2})\log s$,
we have that there exists $C_1>0$ such that
  \begin{align}
    \notag
      &\frac{d}{dt}\int_\om\en1\log\en1
    \\[2mm]
    \notag
      &=-\int_\om\frac{|\na\en1|^2}{\en1}
        +\chi_1\int_\om\frac{\na c_\e\cdot\na\en1}{1+\e\en1}
    \\
    \notag
      &\quad\,
       +\mu_1 \int_\om\en1(1-\en1-a_1\en2)\log\en1
       +\mu_1\int_\om\en1(1-\en1-a_1\en2)
    \\[2mm]
    \notag
      &\leq
       -\int_\om\frac{|\na\en1|^2}{\en1}
       +\chi_1\int_\om\frac{\na c_\e\cdot\na\en1}{1+\e\en1}
       -\frac{\mu_1}{2}\int_\om\en1^2\log\en1
      \\
      \label{n1logn1}
      &\quad\,
       -\mu_1 a_1 \int_\om\en1\en2\log\en1
       -\mu_1 a_1 \int_\om\en1\en2
       +C_1.
  \end{align}
%
%
Similarly, there is $C_2>0$ such that
  \begin{align}
    \notag
      \frac{d}{dt}\int_\om\!\!\en2\log\en2
      &\leq
       -\int_\om\frac{|\na\en2|^2}{\en2}
       +\chi_2\int_\om\frac{\na c_\e\cdot\na\en2}{1+\e\en2}
       -\frac{\mu_2}{2}\int_\om\en2^2\log\en2
    \\
    \label{n2logn2}
      &\quad\,
       -\mu_2 a_2 \int_\om\en1\en2\log\en2
       -\mu_2 a_2 \int_\om\en1\en2
       +C_2.
  \end{align}
%
%
According to an argument similar to that in the proof of 
\cite[Lemma 2.8]{Johannes(2016)}, there exist 
$k_1,C_3,C_4>0$ such that
  \begin{align}
    \notag
      \frac{d}{dt}\int_\om\frac{|\na c_\e|^2}{c_\e}
      \leq
      &-k_1\int_\om c_\e|D^2\log c_\e|^2
       -k_1\int_\om\frac{|\na c_\e|^4}{c_\e^3}
    \\
    \label{|nablac|^2/c}
      &
       +C_3
       +C_4\int_\om|\na u_\e|^2
       -2\int_\om
         \frac{\alpha\na c_\e\cdot\na\en1+\beta\na c_\e\cdot\na\en2}
              {1+\e(\alpha\en1+\beta\en2)}.
  \end{align}
%
%
Now we let $\overline{k},\eta_1,\eta_2,k$ be constants satisfying
$\frac{C_4}{2}-\overline{k}=-\frac{k_1}{4}$,
$\eta_1=\frac{\mu_1}{4\overline{k}\chi}$,
$\eta_2=\frac{b\mu_2}{4\overline{k}\chi}$ 
and $k=\frac{\chi k_1}{4}$.
Then we have
  \begin{align*}
  \frac{d}{dt}\int_\om|u_\e|^2
     =&-2\int_\om|\na u_\e|^2
       -2\int_\om u_\e\cdot(Y_\e u_\e\cdot\na)u_\e
       +2\int_\om u_\e\cdot(\gamma\en1+\d\en2)\na\Phi.
  \end{align*}
From the Schwarz inequality, the Poincar\'{e} inequality, 
the Young inequality and the fact that
$\int_\om\vp^2\leq a\int_\om\vp^2\log\vp+|\om|e^\frac{1}{a}$
holds for any positive function $\vp$ and any $a>0$,
there exist $C_5,C_{\eta_1},C_{\eta_2}>0$ such that
  \begin{align*}
    \gamma\int_\om|\en1\na\Phi\cdot u_\e|
      &\leq\gamma\norm{\na\Phi}_{L^\infty}
           \left(\int_\om \en1^2\right)^\frac{1}{2}
           \left(\int_\om|u_\e|^2\right)^\frac{1}{2}
    \\
      &\leq\gamma\norm{\na\Phi}_{L^\infty}
           \left(\int_\om \en1^2\right)^\frac{1}{2}
           \left(C_5\int_\om|\na u_\e|^2\right)^\frac{1}{2}
    \\
      &\leq\gamma^2 C_5\norm{\na\Phi}_{L^\infty}^2\int_\om \en1^2
          +\frac{1}{4}\int_\om|\na u_\e|^2
    \\
      &\leq\frac{\eta_1}{2}\int_\om\en1^2\log\en1
          +\frac{C_{\eta_1}}{2}
          +\frac{1}{4}\int_\om|\na u_\e|^2
  \end{align*}
and
  \begin{align*}
    \d\int_\om|\en2\na\Phi\cdot u_\e|
    \leq\frac{\eta_2}{2}\int_\om\en2^2\log\en2
        +\frac{C_{\eta_2}}{2}
        +\frac{1}{4}\int_\om|\na u_\e|^2
  \end{align*}
hold.
Therefore we have 
  \begin{align}\label{|u|^2}
    \frac{d}{dt}\int_\om|u_\e|^2
      \leq-\int_\om|\na u_\e|^2
          +\eta_1\int_\om\en1^2\log\en1
          +\eta_2\int_\om\en2^2\log\en2
          +C_{\eta_1}
          +C_{\eta_2}.
  \end{align}
%
%
Thus a combination of \eqref{n1logn1}, \eqref{n2logn2}, 
\eqref{|nablac|^2/c} and \eqref{|u|^2} leads to
  \begin{align*}
      &\frac{d}{dt}
       \bigg[
         \int_\om\en1\log\en1
        +\int_\om\en2\log\en2
        +\frac{\chi}{2}\int_\om\frac{|\na c_\e|^2}{c_\e}
        +\overline{k}\chi\int_\om|u_\e|^2
       \bigg]
    \\[2mm]
      &\leq
       \Big(\overline{k}\chi\eta_1-\frac{\mu_1}{2}\Big)
       \int_\om\en1^2\log\en1
      +\Big(\overline{k}\chi\eta_2-\frac{\mu_2}{2}\Big)
       \int_\om\en2^2\log\en2
    \\
      &\quad\,
       -\bigg(
          \int_\om\frac{|\na\en1|^2}{\en1}
          +\int_\om\frac{|\na\en2|^2}{\en2}
        \bigg)
       +\Big(\frac{\chi}{2}C_4-\overline{k}\chi\Big)
        \int_\om|\na u_\e|^2
    \\
      &\quad\,
       +\int_\om\na c_\e\cdot\na\en1
        \bigg(
          \frac{\chi_1}{1+\e\en1}
          -\frac{\chi\alpha}{1+\e(\alpha\en1+\beta\en2)}
        \bigg)
    \\
      &\quad\,
       +\int_\om\na c_\e\cdot\na\en2
        \bigg(
          \frac{\chi_2}{1+\e\en2}
          -\frac{\chi\beta}{1+\e(\alpha\en1+\beta\en2)}
        \bigg)
    \\
      &\quad\,
       -\frac{\chi}{2}k_1\int_\om c_\e|D^2\log c_\e|^2
       -\frac{\chi}{2}k_1\int_\om\frac{|\na c_\e|^4}{c_\e^3}
       +C_1
       +C_2
       +\frac{\chi}{2}C_3
       +\overline{k}\chi(C_{\eta_1}+C_{\eta_2})
    \\
      &\quad\,
       -\mu_1a_1\int_\om\en1\en2(\log\en1+1)
       -\mu_2a_2\int_\om\en1\en2(\log\en2+1).
  \end{align*}
Here, since  $\en1,\en2$ are nonnegative, we can find $C_6,C_7>0$ such that
  \begin{align*}
      &\int_\om \na c_\e\cdot\na\en1 
        \bigg(
          \frac{\chi_1}{1+\e\en1}
          -\frac{\chi\alpha}{1+\e(\alpha\en1+\beta\en2)}
        \bigg)
    \\[2mm]
      &\leq
       (\chi_1+\chi\alpha)\int_\om|\na c_\e\cdot\na\en1|
    \\[2mm]
      &\leq
       \frac{\chi k_1}{8\norm{c_0}^3_{L^\infty}}\int_\om|\na c_\e|^4
       +C_6\int_\om|\na\en1|^\frac{4}{3}
    \\[2mm]
      &\leq
       \frac{\chi k_1}{8}\int_\om\frac{|\na c_\e|^4}{c_\e^3}
       +\xi_1\int_\om\frac{|\na\en1|^2}{\en1}
       +C_7\int_\om\en1^2
  \end{align*}
and there is $C_8>0$ such that
  \begin{align*}
      &\int_\om\na c_\e\cdot\na\en2
        \bigg(
          \frac{\chi_2}{1+\e\en2}
          -\frac{\chi\beta}{1+\e(\alpha\en1+\beta\en2)}
        \bigg)
    \\
      &\leq
       \frac{\chi k_1}{8}\int_\om\frac{|\na c_\e|^4}{c_\e^3}
       +\xi_2\int_\om\frac{|\na\en2|^2}{\en2}
       +C_8\int_\om\en2^2,
  \end{align*}
which with the fact that $s\log s\geq-\frac{1}{e}$ $(s>0)$ 
enables us to obtain
  \begin{align*}
    &\Big(\overline{k}\chi\eta_1-\frac{\mu_1}{2}\Big)
     \int_\om\en1^2\log\en1
     +\Big(\overline{k}\chi\eta_2-\frac{\mu_2}{2}\Big)
      \int_\om\en2^2\log\en2
  \\
    &-\bigg(
        \int_\om\frac{|\na\en1|^2}{\en1}
        +\int_\om\frac{|\na\en2|^2}{\en2}
      \bigg)
     +\Big(\frac{\chi}{2}C_4-\overline{k}\chi\Big)
      \int_\om|\na u_\e|^2
  \\
    &+\int_\om\na c_\e\cdot\na\en1
      \bigg(
        \frac{\chi_1}{1+\e\en1}
        -\frac{\chi\alpha}{1+\e(\alpha\en1+\beta\en2)}
      \bigg)
  \\
    &+\int_\om\na c_\e\cdot\na\en2
      \bigg(
        \frac{\chi_2}{1+\e\en2}
        -\frac{\chi\beta}{1+\e(\alpha\en1+\beta\en2)}
      \bigg)
  \\
    &-\frac{\chi}{2}k_1\int_\om c_\e|D^2\log c_\e|^2
     -\frac{\chi}{2}k_1\int_\om\frac{|\na c_\e|^4}{c_\e^3}
     +C_1
     +C_2
     +\frac{\chi}{2}C_3
     +\overline{k}\chi(C_{\eta_1}+C_{\eta_2})
  \\
    &-\mu_1a_1\int_\om\en1\en2(\log\en1+1)
     -\mu_2a_2\int_\om\en1\en2(\log\en2+1)
  \\[2mm]
    &\leq
     -\frac{\mu_1}{4}\int_\om\en1^2\log\en1
     -\frac{\mu_2}{4}\int_\om\en2^2\log\en2
  \\
    &\quad
     -(1-\xi_1)\int_\om\frac{|\na\en1|^2}{\en1}
     -(1-\xi_2)\int_\om\frac{|\na\en2|^2}{\en1}
  \\
    &\quad
     -k\int_\om|\na u_\e|^2
     -k\int_\om c_\e|D^2\log c_\e|
     -k\int_\om\frac{|\na c_\e|^4}{c_\e^3}
     +C_7\int_\om\en1^2
     +C_8\int_\om\en2^2
     +C_9.
  \end{align*}
Therefore we obtain this lemma.
\end{proof}

{\it Proof of Theorem 1.2.}
Let $\tau=\min\{1,\frac{1}{2}\etmax\}$, $\xi_1,\xi_2\in (0,1)$ and $\chi>0$.
Lemma \ref{lem:2.10}, 
the facts that 
$s^2\log s\geq s\log s-\frac{1}{2e}$ $(s>0)$ 
and $\en1,\en2,c_\e>0$ imply 
  \begin{align*}
   \frac{d}{dt}{\cal F}_\e+{\cal F}_\e 
   \leq C\int_\om\en1^2 
        +\overline{C}\int_\om\en2^2 
        +\widetilde{C}'
  \end{align*}
for some $C,\overline{C},\widetilde{C}'>0$. 
According to \cite[Lemma 2.5]{Johannes(2016)}, 
there exists $C_1>0$ such that 
  \begin{align*}
    \int^{t+\tau}_t\!\!\!\!\int_\om\en{i}^2\leq C_1 
  \end{align*}
for all $t\in (0,\etmax -\tau)$ 
and each $i=1,2$. 
From the uniform Gronwall type lemma
(see e.g., \cite[Lemma 3.2]{TW}) 
we can find $C_2>0$ such that
  \begin{align}\label{tool1}
     \int_\om\en1\log\en1
     +\int_\om\en2\log\en2
     +\frac{\chi}{2}\int_\om\frac{|\na c_\e|^2}{c_\e}
     +\overline{k}\chi\int_\om|u_\e|^2
   \leq C_2
  \end{align}
for all $t\in(0,\etmax)$.
Moreover, we have from integration of the differential inequality in Lemma \ref{lem:2.10} 
over $(t,t+\tau)$ that for all $\xi_1,\xi_2\in(0,1)$ there is $C_3>0$ such that 
  \begin{align}
    \notag
      &\frac{\mu_1}{4}\int^{t+\tau}_t\!\!\!\!\int_\om\en1^2\log\en1
       +\frac{\mu_2}{4}\int^{t+\tau}_t\!\!\!\!\int_\om\en2^2\log\en2
       +k\int^{t+\tau}_t\!\!\!\!\int_\om c_\e|D^2\log c_\e|^2
    \\
    \label{tool2}
      &\quad
       +(1-\xi_1)\int^{t+\tau}_t\!\!\!\!\int_\om
                 \frac{|\na\en1|^2}{\en1}
       +(1-\xi_2)\int^{t+\tau}_t\!\!\!\!\int_\om
                  \frac{|\na\en2|^2}{\en2}
       \leq C_3
  \end{align}
  and
  \begin{align}
    \label{tool3}
      &\int^{t+\tau}_t\!\!\!\!\int_\om\frac{|\na c_\e|^4}{c_\e^3}
       +\int^{t+\tau}_t\!\!\!\!\int_\om|\na u_\e|^2
       \leq C_3
  \end{align}
  as well as
  \begin{align}
    \notag
      &\int^{t+\tau}_t\!\!\!\!\int_\om|\na\en1|^\frac{4}{3}
       +\int^{t+\tau}_t\!\!\!\!\int_\om|\na\en2|^\frac{4}{3}
    \\
    \label{tool4}
      &\quad
       +\int_\om|\na c_\e|^2
       +\int^{t+\tau}_t\!\!\!\!\int_\om|\na c_\e|^4
       +\int^{t+\tau}_t\!\!\!\!\int_\om\en1^2
       +\int^{t+\tau}_t\!\!\!\!\int_\om\en2^2
       \leq C_3
  \end{align}
for all $t\in[0,\etmax-\tau)$. 
Now we assume $\etmax<\infty$ for some $\e >0$. 
From \eqref{tool1}, \eqref{tool2}, \eqref{tool3} and \eqref{tool4},
we can see that there exists $C_4>0$ such that
  $$
  \begin{array}{rcl}
     \norm{n_{1,\e}(\cdot,t)}_{L^{\infty}(\om)}\leq C_4,
     &&
     \norm{n_{2,\e}(\cdot,t)}_{L^{\infty}(\om)}\leq C_4,\\
     \norm{c_\e(\cdot,t)}_{W^{1,q}(\om)}\leq C_4,
     &&
     \norm{A^\sigma u_\e(\cdot,t)}_{L^2(\om)}\leq C_4
  \end{array}
  $$
for all $t\in(0,\etmax)$, 
which is inconsistent with \eqref{extension}. 
Therefore we obtain $\etmax=\infty$ for all $\e>0$, which means 
global existence and 
boundedness of $(\en1,\en2,c_\e,u_\e)$.
We next verify convergence of the solution $(\en1,\en2,c_\e,u_\e)$.
Due to Lemma \ref{lem:2.10} and arguments similar to 
those in \cite{Johannes(2016)},
we establish that for all $T>0$ there is $C_5>0$ such that
  \begin{align}
    \notag
    \norm{(\en1)_t}_{L^1{((0,T);(W^{2,4}_0(\om))^*)}}\leq C_5,
    \qquad
    &
    \norm{(\en2)_t}_{L^1{((0,T);(W^{2,4}_0(\om))^*)}}\leq C_5,
    \\
    \label{lem:2.4}
    \norm{(c_\e)_t}_{L^2{((0,T);(W^{1,2}_0(\om))^*)}}\leq C_5,
    \qquad
    &
    \norm{(u_\e)_t}_{L^2{((0,T);(W^{1,3}(\om))^*)}}\leq C_5
  \end{align}
for all $\e>0$,
which together with arguments in \cite{Johannes(2016)} 
implies that there exist a sequence 
$(\e_j)_{j\in\mathbb{N}}$ such that $\e_j\searrow 0$ 
as $j\to\infty$ and functions $\n1,\n2,c,u$ such that
  \begin{align*}
  \n1,\n2&\in L^2_{\rm loc}([0,\infty);L^2(\om))\cap 
              L^\frac{4}{3}_{\rm loc}([0,\infty);W^{1,\frac{4}{3}}(\om)),
         \\
        c&\in L^2_{\rm loc}([0,\infty);W^{1,2}(\om)),
         \\
        u&\in L^2_{\rm loc}([0,\infty);W^{1,2}_{0,\sigma}(\om))
  \end{align*}
and that
  \begin{align}
    \notag
    \en1&\to\n1
    &&\mbox{in }L^\frac{4}{3}_{\rm loc}([0,\infty);L^p(\om))
    \quad
    \mbox{for all }p\in\left[1,\frac{12}{5}\right)
    \ \ 
    \mbox{and a.e. in }\om\times(0,\infty),
  \\
    \notag
    \en2&\to\n2
    &&\mbox{in }L^\frac{4}{3}_{\rm loc}([0,\infty);L^p(\om))
    \quad
    \mbox{for all }p\in\left[1,\frac{12}{5}\right)
    \ \ 
    \mbox{and a.e. in }\om\times(0,\infty),
  \\
    \notag
    c_\e&\to c
    &&\mbox{in }C^0_{\rm loc}([0,\infty);L^p(\om))
    \quad
    \mbox{for all }p\in[1,6)
    \ \ \ \,\quad
    \mbox{and a.e. in }\om\times(0,\infty),
  \\
    \notag
    u_\e&\to u
    &&\mbox{in }L^2_{\rm loc}([0,\infty);L^p(\om))
    \quad
    \mbox{for all }p\in[1,6)
    \ \ \ \,\quad
    \mbox{and a.e. in }\om\times(0,\infty),
  \\
    \notag
    c_\e&\to c
    &&\mbox{weakly$^{*}$ in }L^\infty(\om\times(t,t+1))
    \quad
    \mbox{for all }t\geq0,
  \\
    \notag
    \na\en1&\to\na\n1
    &&\mbox{weakly\,\ \ in }L^\frac{4}{3}_{\rm loc}([0,\infty);L^\frac{4}{3}(\om)),
  \\
    \notag
    \na\en2&\to\na\n2
    &&\mbox{weakly\,\ \ in }L^\frac{4}{3}_{\rm loc}([0,\infty);L^\frac{4}{3}(\om)),
  \\
    \notag
    \na c_\e&\to\na c
    &&\mbox{weakly$^{*}$ in }L^\infty_{\rm loc}([0,\infty);L^2(\om)),
  \\
    \notag
    \na u_\e&\to\na u
    &&\mbox{weakly\,\ \ in }L^2_{\rm loc}([0,\infty);L^2(\om)),
  \\
    \notag
    Y_\e u_\e&\to u
    &&\mbox{in }L^2_{\rm loc}([0,\infty);L^2(\om)),
  \\
    \notag
    \en1&\to\n1
    &&\mbox{in }L^2_{\rm loc}([0,\infty);L^2(\om)),
  \\
    \label{prop:2.1}
    \en2&\to\n2
    &&\mbox{in }L^2_{\rm loc}([0,\infty);L^2(\om))
  \end{align}
as $\e=\e_j\searrow0$. Thus we see that $(\n1,\n2,c,u)$ 
is a weak solution to \eqref{P} in the sense of 
Definition \ref{def;weaksol}, which means the end of the proof. \qed
%
%
%
%
\section{Proof of Theorem 1.3 (Eventual smoothness and stabilization)}\label{section3}
In this section we will prove Theorem \ref{mainthm2}. 
The following lemma plays an important role 
in the proof of Theorem \ref{mainthm2}. 
\begin{lemma}\label{lem:3.2}
  \begin{description}
  \item[(i)]
    Assume that $a_1,a_2\in(0,1)$.
    Then there exists $C>0$ such that for all $\e>0$, 
    \begin{align*}
      \int^\infty_0\!\!\!\!\int_\om(\en1-N_1)^2\leq C,
      \quad
      \int^\infty_0\!\!\!\!\int_\om(\en2-N_2)^2\leq C,
    \end{align*}
    where $N_1=\frac{1-a_1}{1-a_1a_2}$, $N_2=\frac{1-a_2}{1-a_1a_2}.$
  \\
  \item[(ii)]
    Assume $a_1\geq a_2>0$. 
    Then there exists $C>0$ such that for all $\e>0$, 
    \begin{align*}
      \int^\infty_0\!\!\!\!\int_\om\en1^2\leq C,
      \quad
      \int^\infty_0\!\!\!\!\int_\om(\en2-1)^2\leq C.
    \end{align*}
  \end{description}
\end{lemma}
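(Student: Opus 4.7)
The plan is to use the Lyapunov-type functional
\begin{equation*}
{\cal G}_{\e,B} := w_1 \int_\om \bigl(\en1 - N_1 \log \en1\bigr) + w_2 \int_\om \bigl(\en2 - N_2 \log \en2\bigr) + \frac{B}{2}\int_\om c_\e^2
\end{equation*}
anticipated in the introduction, with positive weights $w_1,w_2,B$ depending on the parameters of \eqref{Pe}, and to establish
\begin{equation*}
\frac{d}{dt}{\cal G}_{\e,B} \le -\delta \int_\om \bigl[(\en1 - N_1)^2 + (\en2 - N_2)^2\bigr]
\end{equation*}
for some $\delta>0$ uniform in $\e$. Since $s - N\log s \ge N - N\log N$ for every $s,N>0$, ${\cal G}_{\e,B}$ is bounded below independently of $\e$, so integration in time immediately yields the claimed spacetime bounds. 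For case (ii) one sets $N_1:=0, N_2:=1$, so that the $\en1$-contribution reduces to $w_1\int_\om \en1$.

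Concretely, I would test the $i$-th ($i=1,2$) equation of \eqref{Pe} against $w_i(1-N_i/\en{i})$ and the third equation against $Bc_\e$. The convective terms vanish thanks to $\na\cdot u_\e=0$ and $u_\e|_{\partial\om}=0$, via
\begin{equation*}
\int_\om (u_\e\cdot\na\en{i})\bigl(1-\tfrac{N_i}{\en{i}}\bigr) = \int_\om u_\e\cdot\na(\en{i}-N_i\log\en{i}) = 0 \quad\text{and}\quad \int_\om c_\e(u_\e\cdot\na c_\e) = 0.
\end{equation*}
The diffusion terms produce $-w_iN_i\int_\om|\na\en{i}|^2/\en{i}^2$ and $-B\int_\om|\na c_\e|^2$, while the approximate absorption term contributes $-B\int_\om c_\e^2\cdot\tfrac{1}{\e}\log\bigl(1+\e(\alpha\en1+\beta\en2)\bigr)\le 0$ and is discarded.

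For the chemotaxis contribution, integration by parts gives $w_i\chi_iN_i\int_\om\na\en{i}\cdot\na c_\e/(\en{i}(1+\e\en{i}))$, and Young's inequality together with $(1+\e\en{i})^{-1}\le 1$ bounds it by $\tfrac{w_iN_i}{2}\int_\om|\na\en{i}|^2/\en{i}^2+\tfrac{w_i\chi_i^2 N_i}{2}\int_\om|\na c_\e|^2$. Half of the diffusion dissipation absorbs the first piece, and the second piece is controlled by $-B\int_\om|\na c_\e|^2$ provided $B>\tfrac{1}{2}(w_1\chi_1^2 N_1+w_2\chi_2^2 N_2)$.

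In case (i), the equilibrium relations $N_1+a_1N_2=1$ and $a_2N_1+N_2=1$ make the kinetic contributions collapse to
\begin{equation*}
-w_1\mu_1\int_\om(\en1-N_1)^2 - w_2\mu_2\int_\om(\en2-N_2)^2 - (w_1\mu_1 a_1+w_2\mu_2 a_2)\int_\om(\en1-N_1)(\en2-N_2),
\end{equation*}
a quadratic form in $(\en1-N_1,\en2-N_2)$. The main obstacle is to choose $w_1,w_2>0$ so that this form is strictly negative definite; the choice $w_1=\mu_2 a_2/(\mu_1 a_1)$, $w_2=1$ makes its determinant proportional to $1-a_1a_2$, strictly positive precisely because $a_1,a_2\in(0,1)$. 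Case (ii) is handled in parallel with $N_1=0,N_2=1$: the $\en1$-kinetic term becomes $w_1\mu_1(1-a_1)\int_\om\en1-w_1\mu_1\int_\om\en1^2-w_1\mu_1 a_1\int_\om\en1(\en2-1)$, and combined with the analogous $\en2$-contribution yields a quadratic form in $(\en1,\en2-1)$ that is again made strictly negative definite by a suitable weight choice, using the relationship between $a_1$ and $a_2$ assumed in the hypothesis.
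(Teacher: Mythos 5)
Your route is the same as the paper's: the paper establishes this lemma by carrying out, for precisely the functionals ${\cal G}_{\e,B}$ you write down, the computations of \cite[Lemmas 4.1--4.4]{HKMY(2017)}, and your treatment of the convective, diffusive, chemotactic and absorption terms, the uniform lower bound for ${\cal G}_{\e,B}$, and the final integration in time are all correct. Case (i) is in order: with the weights normalized so that $w_1\mu_1a_1=w_2\mu_2a_2$, the discriminant condition for the kinetic quadratic form reduces exactly to $a_1a_2<1$, which holds under $a_1,a_2\in(0,1)$. (Your insertion of the weights $w_1,w_2$ is in fact needed; with $w_1=w_2=1$, as in the functional displayed in the paper, negative definiteness would require the additional restriction $(\mu_1a_1+\mu_2a_2)^2<4\mu_1\mu_2$.)

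Case (ii), however, contains a genuine gap. The relevant hypothesis is $a_1\ge 1>a_2>0$, which does \emph{not} imply $a_1a_2<1$ (take $a_1=3$, $a_2=\tfrac12$). For any weights, the AM--GM inequality gives $(w_1\mu_1a_1+w_2\mu_2a_2)^2\ge 4w_1w_2\mu_1\mu_2a_1a_2$, so when $a_1a_2\ge1$ the quadratic form $w_1\mu_1x^2+(w_1\mu_1a_1+w_2\mu_2a_2)xy+w_2\mu_2y^2$ in $(x,y)=(\en1,\en2-1)$ has nonnegative discriminant and cannot be made negative definite by \emph{any} choice of $w_1,w_2>0$; your concluding sentence is therefore false as stated. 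To close this case you must retain and exploit the two pieces of structure that the pure quadratic-form argument discards: the nonpositive linear term $-w_1\mu_1(a_1-1)\int_\om\en1$ produced by $a_1\ge1$, and the pointwise constraint $\en2\ge0$, i.e.\ $\en2-1\ge-1$ (equivalently, keep the cross term in the form $-(w_1\mu_1a_1+w_2\mu_2a_2)\int_\om\en1\en2\le0$ rather than expanding about $\en2=1$). On the admissible region $\{\en1\ge0,\ \en2\ge0\}$ the sum of the linear and quadratic contributions can then be bounded by $-\delta\int_\om\en1^2-\delta\int_\om(\en2-1)^2$, but this requires a more delicate elementary estimate than negative definiteness of the form on all of $\mathbb{R}^2$; this is exactly what the computations cited from \cite{HKMY(2017)} supply.
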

\begin{proof}
Due to arguments similar to those in 
\cite[Lemmas 4.1--4.4]{HKMY(2017)}, by using the energy functions
  \begin{align*}
   {\cal G}_{\e,B}:=\int_\om\left(\en1-N_1\log\frac{\en1}{N_1}\right)
                    +\int_\om\left(\en2-N_2\log\frac{\en2}{N_2}\right)
                    +\frac{B}{2}\int_\om c_\e^2
  \end{align*}
in the case that $a_1,a_2\in(0,1)$, and 
  \begin{align*}
    {\cal G}_{\e,B}:=\int_\om\en1
                     +\int_\om\big(\en2-\log\en2\big)
                     +\frac{B}{2}\int_\om c_\e^2
  \end{align*}
in the case that $a_1\geq 1>a_2>0$, 
we can see this lemma.
\end{proof}

{\it Proof of Theorem 1.3.}
According to an argument similar to that in the proof of 
\cite[Lemmas 3.4 and 3.5]{Johannes(2016)}, 
for all $\eta>0$ and $p\in(1,\infty)$ there are $T>0$, 
$\e_0>0$ and $C_1>0$ such that for all $t>T$ and $\e\in(0,\e_0)$,
  \begin{align*}
    \norm{c_\e(\cdot,t)}_{L^\infty(\om)}<\eta,
    \quad
    \norm{\en1^p(\cdot,t)}_{L^p(\om)}\leq C_1,
    \quad
    \norm{\en2^p(\cdot,t)}_{L^p(\om)}\leq C_1.
  \end{align*}
We next consider the estimate for $u_\e$.
Since $\na\cdot u_\e=0$, it follows from 
the Young inequality, the Poincar\'{e} inequality, 
boundedness of $\na\Phi$ and \eqref{Pe} 
that there exists $C_2 >0$ such that
\begin{align*}
  \frac{d}{dt}\int_\om|u_\e|^2
      &=-2\int_\om|\na u_\e|^2
        -2\int_\om u_\e\cdot(Y_\e u_\e\cdot\na)u_\e
        +2\int_\om u_\e\cdot(\gamma\en1+\d\en2)\na\Phi
    \\[2mm]
      &=-2\int_\om|\na u_\e|^2
        -2\int_\om u_\e\cdot(Y_\e u_\e\cdot\na)u_\e
    \\
      &\quad\,
       +2\gamma \int_\om u_\e\cdot(\en1-n_{1,\infty})\na\Phi
       +2\d \int_\om u_\e\cdot(\en2-n_{2,\infty})\na\Phi
    \\[2mm]
      &\leq
       -\int_\om|\na u_\e|^2
       -2\int_\om u_\e\cdot(Y_\e u_\e\cdot\na)u_\e
    \\
      &\quad\,
       + C_2 \int_\om(\en1-n_{1,\infty})^2
       + C_2 \int_\om(\en2-n_{2,\infty})^2, 
\end{align*}
where $(n_{1,\infty},n_{2,\infty})=(N_1,N_2)$ 
in the case that $a_1,a_2\in(0,1)$ and 
$(n_{1,\infty},n_{2,\infty})=(0,1)$ in the case that $a_1\geq1>a_2>0$.
Then, noticing from straightforward calculations that 
$\int_\om u_\e\cdot(Y_\e u_\e\cdot\na)u_\e = 
\frac 12 \int_\om \nabla \cdot (Y_\e u_\e) |u_\e|^2 =0$, 
thanks to Lemma \ref{lem:3.2}, 
we obtain from integration of the above inequality 
over $(0,\infty)$ that there exists $C_3>0$ such that
  \begin{align*}
    \int^\infty_0 \int_\om|\na u_\e|^2\leq C_3.
  \end{align*}
According to an argument similar to that in the proof of 
\cite[Lemmas 3.7--3.11]{Johannes(2016)}, 
there exist $\alpha'>0$, $T^*>T$, $C_4>0$ such that
for all $t>T^*$ there exists $\e_1>0$ 
such that for all $\e\in(0,\e_1)$,
$$
\begin{array}{rcl}
  \norm{\en1}_{C^{1+\alpha',\frac{\alpha'}{2}}
                (\overline{\om}\times[t,t+1])}
  \leq C_4,
  &\quad&
  \norm{\en2}_{C^{1+\alpha',\frac{\alpha'}{2}}
                (\overline{\om}\times[t,t+1])}
  \leq C_4,
\\
  \norm{c_\e}_{C^{1+\alpha',\frac{\alpha'}{2}}
                (\overline{\om}\times[t,t+1])}
  \leq C_4,
  &\quad&
  \norm{u_\e}_{C^{1+\alpha',\frac{\alpha'}{2}}
                (\overline{\om}\times[t,t+1])}
  \leq C_4.
\end{array}
$$
Then aided by arguments similar to those in the proofs of 
\cite[Corollary 3.3--Lemma 3.13]{Johannes(2016)},
from \eqref{lem:2.4} there are $\alpha'\in(0,1)$ and $T_0>0$ 
as well as a subsequence $\e_{j}\searrow0$ such that 
for all $t>T_0$
\begin{align*}
  \en1\to\n1,\quad\en2\to\n2,\quad c_\e\to c,\quad u_\e\to u
  \quad\mbox{in}\ 
  C^{1+\alpha',\frac{\alpha'}{2}}(\overline{\om}\times[t,t+1])
\end{align*}
as $\e=\e_j\searrow0$, 
and then 
\begin{align}\label{cor:3.3}
  \notag
  \norm{\n1}_{C^{1+\alpha',\frac{\alpha'}{2}}
               (\overline{\om}\times[t,t+1])}
  \leq C_4,
  &\qquad
  \norm{\n2}_{C^{1+\alpha',\frac{\alpha'}{2}}
               (\overline{\om}\times[t,t+1])}
  \leq C_4,
\\
  \norm{c}_{C^{1+\alpha',\frac{\alpha'}{2}}
             (\overline{\om}\times[t,t+1])}
  \leq C_4,
  &\qquad
  \norm{u}_{C^{1+\alpha',\frac{\alpha'}{2}}
             (\overline{\om}\times[t,t+1])}
  \leq C_4.
\end{align} 
Then we obtain 
\begin{align*}
  \n1,\n2,c,u \in C^{2+\alpha',1+\frac{\alpha'}{2}}
                   (\overline{\om}\times[T_0,\infty)).
\end{align*}
Finally, from \eqref{cor:3.3} the solution $(\n1,\n2,c,u)$ of $\eqref{Pe}$ 
constructed in \eqref{prop:2.1} fulfills
  \begin{align*}
    \n1(\cdot,t)\to N_1,\quad
    \n2(\cdot,t)\to N_2,\quad
    c(\cdot,t)\to 0,\quad
    u(\cdot,t)\to 0\quad
    \mbox{in}\ C^1(\overline{\om})\quad
    (t\rightarrow\infty)
  \end{align*}
in the case that $a_1,a_2\in(0,1)$, and
  \begin{align*}
    \n1(\cdot,t)\to 0,\quad
    \n2(\cdot,t)\to 1,\quad
    c(\cdot,t)\to 0,\quad
    u(\cdot,t)\to 0,\quad
    \mbox{in}\ C^1(\overline{\om})\quad
    (t\rightarrow\infty)
  \end{align*}
in the case that $a_1\geq 1>a_2>0$,
which enable us to see Theorem \ref{mainthm2}. \qed 


 
\end{document}